\newcommand{\ko}{\: , \;}
\newcommand{\ul}[1]{\underline{#1}}
\numberwithin{equation}{subsection}
\newtheorem{theorem}{Theorem}
\newtheorem{classification-theorem}[subsection]{Classification Theorem}
\newtheorem{decomposition-theorem}[subsection]{Decomposition Theorem}
\newtheorem{proposition-definition}[subsection]{Proposition-Definition}
\newtheorem{periodicity-conjecture}[subsection]{Periodicity Conjecture}
\newtheorem{corollary}[theorem]{Corollary}
\newcommand{\reminder}[1]{}
\newcommand{\opname}[1]{\operatorname{\mathsf{#1}}}
\renewcommand{\mod}{\opname{mod}\nolimits}
\newcommand{\Mod}{\opname{Mod}\nolimits}
\newcommand{\per}{\opname{per}\nolimits}
\newcommand{\add}{\opname{add}\nolimits}
\newcommand{\Z}{\mathbb{Z}}
\newcommand{\iso}{\xrightarrow{_\sim}}
\newcommand{\Hom}{\opname{Hom}}
\newcommand{\End}{\opname{End}}
\newcommand{\ten}{\otimes}
\newcommand{\lten}{\overset{\boldmath{L}}{\ten}}
\newcommand{\ca}{{\mathcal A}}
\newcommand{\cd}{{\mathcal D}}
\newcommand{\ct}{{\mathcal T}}
\renewcommand{\phi}{\varphi}
\begin{document}

\date{May 31, 2018}

\title{A remark on a theorem by C.~Amiot}
\author{Bernhard Keller}
\address{Universit\'e Paris Diderot -- Paris 7\\
    UFR de Math\'ematiques\\
   Institut de Math\'ematiques de Jussieu--PRG, UMR 7586 du CNRS \\
   Case 7012\\
    B\^{a}timent Chevaleret\\
    75205 Paris Cedex 13\\
    France
}
\email{bernhard.keller@imj-prg.fr}

\begin{abstract}
C.~Amiot has classified the connected triangulated $k$-categories with
finitely many isoclasses of indecomposables satisfying suitable hypotheses.
We remark that her proof shows that these triangulated categories are determined
by their underlying $k$-linear categories. We observe that if the connectedness
assumption is dropped, the triangulated categories are still determined by
their underlying $k$-categories together with the action of the suspension
functor on the set of isoclasses of indecomposables.
\end{abstract}

\keywords{Additively finite triangulated category}

\subjclass[2010]{18E30}


\maketitle

\section{The connected case}

We refer to \cite{Amiot07a} for unexplained notation
and terminology.
Let $k$ be an algebraically closed field and $\ct$ a $k$-linear $\Hom$-finite
triangulated category with split idempotents. Recall Theorem~7.2
of \cite{Amiot07a}:

\begin{theorem}[Amiot] 
Suppose $\ct$ is connected, algebraic, standard and has
only finitely many isoclasses of indecomposables. Then there exists a 
Dynkin quiver $Q$ and a triangle autoequivalence $\Phi$ of $\cd^b(\mod kQ)$ such
that $\ct$ is triangle equivalent to the triangulated \cite{Keller05}
orbit category
$\cd^b(\mod kQ)/\Phi$.
\end{theorem}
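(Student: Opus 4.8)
The plan is to reduce the statement to a combinatorial description of the Auslander--Reiten quiver of $\ct$ plus a recognition theorem for algebraic triangulated categories. First I would record that, $k$ being algebraically closed and $\ct$ being $\Hom$-finite with split idempotents, $\ct$ is Krull--Schmidt; and since it has only finitely many isoclasses of indecomposables, it admits a Serre functor, hence Auslander--Reiten triangles (part of the general machinery set up in the earlier sections of \cite{Amiot07a}). Write $\Gamma$ for the Auslander--Reiten quiver of $\ct$ and $\tau$ for the Auslander--Reiten translation; the connectedness hypothesis makes $\Gamma$ a finite connected stable translation quiver.

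The next step is the Riedtmann-type structure theorem for such quivers, again established in \cite{Amiot07a}: as a translation quiver, $\Gamma$ is isomorphic to a quotient $\Z\Delta/G$, where $\Delta$ is a simply-laced Dynkin diagram ($A_n$, $D_n$ or $E_n$) and $G$ is an admissible group of automorphisms of $\Z\Delta$ whose canonical translation descends to $\tau$; finiteness of $\Gamma$ forces $G$ to contain a nonzero power of that translation. Now standardness enters: by definition it says that the full subcategory of indecomposables of $\ct$ is equivalent, as a $k$-linear category, to the mesh category $k(\Gamma) = k(\Z\Delta/G)$. In particular the whole $k$-linear structure of $\ct$ depends only on the pair $(\Delta, G)$; this $k$-linear rigidity is the observation underlying the present note.

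To produce the asserted equivalence I would fix a Dynkin quiver $Q$ with underlying graph $\Delta$. Then $\cd^b(\mod kQ)$ is algebraic and standard, with Auslander--Reiten quiver $\Z\Delta$ (Happel; Bongartz--Gabriel). Using the description of $\Aut(\Z\Delta)$ in terms of the translation and the diagram automorphisms, together with the fact that the translation, the shift and the Serre functor of $\cd^b(\mod kQ)$ induce such automorphisms, one constructs a triangle autoequivalence $\Phi$ of $\cd^b(\mod kQ)$ inducing $G$ on $\Z\Delta$; that such a $\Phi$ exists uses that $\Gamma$ is already the Auslander--Reiten quiver of a triangulated category, and $\Phi$ is chosen so that the orbit category is triangulated by the criterion of \cite{Keller05}. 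That criterion equips $\cd^b(\mod kQ)/\Phi$ with a triangulated structure inherited from a dg model of the orbit, so it is again algebraic, and by construction it is standard with Auslander--Reiten quiver $\Z\Delta/G \cong \Gamma$.

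It remains --- and this is the crux --- to prove the recognition theorem that makes the remark of the paper: two algebraic, standard, $\Hom$-finite Krull--Schmidt triangulated $k$-categories that are equivalent as $k$-linear categories are triangle equivalent. Granting it, $\ct$ and $\cd^b(\mod kQ)/\Phi$ are both standard with Auslander--Reiten quiver $\Gamma$, hence $k$-linearly equivalent, so $\ct \simeq \cd^b(\mod kQ)/\Phi$. To prove the recognition theorem one uses that an algebraic triangulated category is the $H^0$ of a dg enhancement, that standardness identifies this $H^0$ with the mesh category $k(\Gamma)$, and that the enhancement is then determined up to quasi-equivalence by $k(\Gamma)$ alone, the obstructions lying in the Hochschild cohomology $\hh^{\geq 2}$ of the mesh algebra of a Dynkin translation quiver, which one checks vanishes in the relevant degrees. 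I expect this last step --- pinning down the dg (equivalently $A_\infty$) structure of the enhancement via Hochschild cohomology of Dynkin mesh algebras, with the care needed in the exceptional types and for the group $G$ --- to be the main obstacle; everything before it is either classical or already contained in \cite{Amiot07a}.
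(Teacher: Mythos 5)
There is a genuine gap, and it sits exactly where you place it yourself: the ``recognition theorem'' of your last paragraph. Everything up to that point (Krull--Schmidt, Serre duality, Riedtmann's $\Z\Delta/G$ description of the Auslander--Reiten quiver, standardness identifying the $k$-linear category with the mesh category, and the construction of a triangle autoequivalence $\Phi$ realizing $G$) is indeed contained in \cite{Amiot07a} and in \cite{Keller05}. But you then reduce the theorem to the claim that two algebraic, standard, $\Hom$-finite Krull--Schmidt triangulated categories which are $k$-linearly equivalent are triangle equivalent, and you justify this only by asserting that the obstructions live in $\hh^{\geq 2}$ of the mesh algebra of $\Z\Delta/G$ and ``one checks'' they vanish. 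That assertion carries the entire weight of the proof and is not established; it is not a routine computation (the relevant bigraded Hochschild groups of mesh-type algebras of quotients $\Z\Delta/G$ are not obviously zero --- the existence of non-standard representation-finite situations in small characteristic is a warning that such deformation spaces need not vanish for free), and setting up the obstruction theory correctly for a category with a $\Sigma$-action, rather than for a single graded algebra, is itself delicate. Note also that your recognition theorem is precisely the Corollary which the present note \emph{deduces from} Amiot's proof; so as written your argument shifts the burden onto a statement at least as strong as the one to be proved.

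Amiot's actual route, recalled and refined in the proof of the Corollary above, avoids obstruction theory entirely and is worth contrasting with yours. One starts from the $k$-linear equivalence $G\colon \cd^b(\mod kQ)/F \iso \ct$ provided by Riedtmann's work and standardness, where $F$ is merely $k$-linear. One then proves $F\Sigma U \cong \Sigma F U$ on indecomposables using Heller's isomorphism $\Sigma_m^3 \iso \Sigma_\cd$ on the stable module category of $\cd=\cd^b(\mod kQ)$ --- this is where connectedness is used essentially, to rule out the $A_1$-type degenerate case --- so that $T=F(kQ)$ is a tilting object. The lifting theorem of \cite{Keller00} then produces a bimodule complex $Y$ with $\Phi = ?\lten_{kQ} Y$ a genuine triangle autoequivalence, and Riedtmann's knitting argument shows $\Phi \cong F$ as $k$-linear functors because they agree on $\add(kQ)$; the same two tools (bimodule lifting plus knitting) upgrade $G$ to a triangle equivalence $\cd/\Phi \iso \ct$ using that $\ct = \per\ca$ is algebraic. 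If you want to salvage your approach, you would need to supply a genuine proof of the enhancement-uniqueness step; the constructive tilting/bimodule argument is the standard way around it.
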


Our aim is to show that the proof of this theorem
in \cite{Amiot07a} actually shows that a given $k$-linear
equivalence $\cd^b(\mod kQ)/F \iso \ct$, where $F$ is a $k$-linear
autoequivalence, lifts to a triangle equivalence
$\cd^b(\mod kQ)/\Phi \iso \ct$, where $\Phi$ is a triangle autoequivalence
lifting $F$. Thus, we obtain the

\begin{corollary} Under the hypotheses of the theorem, the $k$-linear structure
of $\ct$ determines its triangulated structure up to triangle equivalence.
\end{corollary}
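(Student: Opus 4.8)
The plan is to deduce the corollary from Amiot's theorem together with the relative statement announced just above it, which I shall call the \emph{lifting claim}: given a $k$-linear autoequivalence $F$ of $\cd^b(\mod kQ)$ (with $Q$ a Dynkin quiver), every $k$-linear equivalence $\cd^b(\mod kQ)/F\iso\ct$ is the underlying $k$-linear functor of a triangle equivalence $\cd^b(\mod kQ)/\Phi\iso\ct$ for a triangle autoequivalence $\Phi$ lifting $F$, and moreover $\Phi$ may be taken to be $F$ itself whenever $F$ is already a triangle autoequivalence. Granting this, the corollary is immediate: if $\ct$ and $\ct'$ satisfy the hypotheses and $H\colon\ct\iso\ct'$ is a $k$-linear equivalence, Amiot's theorem supplies a triangle equivalence $E\colon\cd^b(\mod kQ)/\Phi\iso\ct$, so that $H\circ E$ is a $k$-linear equivalence $\cd^b(\mod kQ)/\Phi\iso\ct'$; applying the lifting claim with $F=\Phi$ (which is in particular $k$-linear) upgrades it to a triangle equivalence $\cd^b(\mod kQ)/\Phi\iso\ct'$, and chaining $\ct\simeq\cd^b(\mod kQ)/\Phi\simeq\ct'$ gives a triangle equivalence $\ct\simeq\ct'$.

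To prove the lifting claim I would re-run Amiot's proof of Theorem~7.2 on the prescribed data rather than on an arbitrary choice. That proof proceeds in three stages. \emph{(a)} Auslander--Reiten theory shows that a standard, additively finite, connected, algebraic triangulated category has an Auslander--Reiten quiver $\Gamma$ which is a finite connected stable translation quiver; by the structure theorem of Riedtmann and of Happel--Preiser--Ringel this forces $\Gamma\cong\Z Q/\Pi$ for a Dynkin quiver $Q$ and an admissible group $\Pi$ of automorphisms of $\Z Q$. \emph{(b)} Standardness identifies the category $k$-linearly with the mesh category $k(\Gamma)$. \emph{(c)} Every automorphism of $\Z Q$ is induced by a triangle autoequivalence of $\cd^b(\mod kQ)$ (the group $\Aut(\Z Q)$ being generated by the images of the Auslander--Reiten translation, of the suspension, and of the automorphisms of $Q$), so $\Pi$ lifts to a group of triangle autoequivalences and one forms the orbit category $\cd^b(\mod kQ)/\Phi$; by \cite{Keller05} (whose hypothesis Amiot checks in the Dynkin situation) it is triangulated, it has Auslander--Reiten quiver $\Z Q/\Pi$, and it is standard, hence $k$-linearly $k(\Gamma)$; finally its triangulated structure is matched with the given one via the dg enhancements. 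Stages \emph{(a)} and \emph{(b)} visibly depend only on the underlying $k$-category, which therefore determines $\Gamma$, and with it $Q$ and $\Pi$.

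Concretely, start from $\cc:=\cd^b(\mod kQ)/F$ together with the given $k$-linear equivalence $G_0\colon\cc\iso\ct$. Since $\cd^b(\mod kQ)$ is standard for $Q$ Dynkin (Happel), it is $k$-linearly the mesh category $k(\Z Q)$, and $F$, commuting with the Auslander--Reiten translation, induces an automorphism $\bar F$ of the translation quiver $\Z Q$; as $\Z Q$ is simply connected, a $k$-linear autoequivalence of $k(\Z Q)$ is determined up to isomorphism of functors by the automorphism of $\Z Q$ it induces. Hence $\cc$ has Auslander--Reiten quiver $\Z Q/\langle\bar F\rangle$ — this is the $\Gamma$ of stage \emph{(a)}, with $\Pi=\langle\bar F\rangle$ — and, lifting $\bar F$ to a triangle autoequivalence $\Phi$ of $\cd^b(\mod kQ)$ as in stage \emph{(c)}, we get $\Phi\cong F$ as $k$-linear functors, whence $\cd^b(\mod kQ)/\Phi\cong\cc$ $k$-linearly and $\cd^b(\mod kQ)/\Phi$ is triangulated with Auslander--Reiten quiver $\Z Q/\langle\bar F\rangle$. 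Feeding the composite $k$-linear equivalence $\cd^b(\mod kQ)/\Phi\cong\cc\iso\ct$ into stage \emph{(c)}, in place of the arbitrary $k$-linear equivalence $\ct\cong k(\Gamma)$ used there, yields a triangle equivalence $\cd^b(\mod kQ)/\Phi\iso\ct$ which, under the $k$-linear identification $\cc\cong\cd^b(\mod kQ)/\Phi$, has underlying functor $G_0$; and when $F$ is already triangulated one simply takes $\Phi=F$ and runs \emph{(b)}, \emph{(c)} verbatim. This establishes the lifting claim, and with it the corollary.

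The step I expect to be the main obstacle is \emph{(c)}, precisely the \emph{functoriality} of Amiot's comparison of triangulated structures: one must verify that her identification of the dg enhancement of $\ct$ with the dg orbit category of $\cd^b(\mod kQ)$ can be arranged so as to lie over a prescribed $k$-linear equivalence on degree-zero cohomology — equivalently, that the triangulated structure of a standard, additively finite, algebraic triangulated category is recovered from its underlying $k$-category together with the action of the suspension, compatibly with a given $k$-linear equivalence and not merely up to abstract triangle equivalence. In enhancement terms this rests on the rigidity (intrinsic formality) of the pertinent $A_\infty$-structures, essentially on the uniqueness of the enhancement of $\cd^b(\mod kQ)$ for $Q$ Dynkin. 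The auxiliary points — that every automorphism of $\Z Q$ is triangle-induced, that Keller's hypothesis holds for these orbit categories, that $\cd^b(\mod kQ)$ is standard, and the choice of connecting isomorphism for $\Phi$ (which the enhancement supplies, and which in any case does not affect the orbit category as a $k$-category) — are already in \cite{Amiot07a}, \cite{Keller05}, or routine.
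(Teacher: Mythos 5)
Your proposal is correct in its overall architecture and arrives at the statement by broadly the same strategy as the paper: start from Riedtmann's $k$-linear classification giving $(Q,F)$ and a $k$-linear equivalence $\cd^b(\mod kQ)/F\iso\ct$, lift $F$ to a triangle autoequivalence $\Phi$ with $\Phi\cong F$ $k$-linearly, and then rerun the enhancement part of Amiot's proof of Theorem~7.2 to upgrade the $k$-linear equivalence to a triangle equivalence, using Riedtmann's knitting argument to identify functors. The genuine divergence is in the key step, the lifting of $F$. You pass through the Auslander--Reiten quiver: $F$ induces a translation-quiver automorphism $\bar F$ of $\Z Q$, and you invoke the (type-by-type) fact that $\Aut(\Z Q)$ is generated by the classes of $\tau$, $\Sigma$ and the diagram automorphisms, so that $\bar F$ is induced by some triangle autoequivalence $\Phi$, necessarily isomorphic to $F$ by knitting. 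The paper instead argues intrinsically and uniformly: by Heller's theorem \cite{Heller68}, $\Sigma_m^3\iso\Sigma_\cd$ in $\ul{\mod}\,\cd$, and since $\Sigma_m$ depends only on the $k$-linear structure, any $k$-linear autoequivalence $F$ satisfies $F\Sigma U\cong\Sigma FU$ on indecomposables (connectedness rules out the degenerate case); hence $F(kQ)$ is a tilting object, which by \cite{Keller00} lifts to a $kQ$-bimodule complex $Y$, \emph{unique} in the derived category of bimodules once $kQ\iso\End_\cd(F(kQ))$ is fixed, and $\Phi=?\lten_{kQ}Y$. This buys two things over your route: it avoids the case analysis of $\Aut(\Z Q)$ (which Amiot carries out only for generators of weakly admissible groups, so you would still need to check that $\bar F$ is of that form), and, more importantly, it produces a canonical \emph{dg} lift of $F$, which is exactly what is needed to form the triangulated orbit category and to run the final comparison of enhancements via the universal property of dg orbit categories. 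That final comparison is the point you rightly flag as the main obstacle and attribute to intrinsic formality; the paper resolves it not by formality but by lifting the composite $\cd\to\cd/\Phi\to\ct=\per\ca$ to a bimodule-induced triangle functor and quoting from Amiot's proof that it factors through an algebraic triangle equivalence $\Gamma$, which is then identified with $G$ by knitting. So your argument is viable, but the two places where you defer to "stage (c)" are precisely where the paper's tilting-bimodule mechanism is doing the real work.
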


\begin{proof} The facts that $\ct$ is connected, standard and has only finitely
many isoclasses of indecomposables imply that there is a Dynkin quiver $Q$,
a $k$-linear autoequivalence $F$ of $\cd^b(\mod kQ)$ and a $k$-linear
equivalence
\[
G: \cd^b(\mod kQ)/F \iso \ct.
\]
This follows from the work of Riedtmann \cite{Riedtmann80}, cf. section~6.1
of \cite{Amiot07a}. We will show that $F$ lifts to an (algebraic) triangle
autoequivalence of $\cd^b(\mod kQ)$ and $G$ to an (algebraic) triangle 
equivalence $\Gamma$. We give the details in the case of $F$ which
were omitted in \cite{Amiot07a}. Put $\cd=\cd^b(\mod kQ)$. Since $\cd$ is
triangulated, the category $\mod\cd$ of finitely presented functors
$\cd^{op}\to \Mod k$ is an exact Frobenius category and we have
a canonical isomorphism of functors $\ul{\mod}\,\cd \to \ul{\mod}\,\cd$
\[
\Sigma_m^3 \iso \Sigma_\cd \ko
\]
where $\Sigma_\cd :\mod\cd \to \mod\cd$ denotes the functor
$\mod \cd \to \mod\cd$ induced by $\Sigma$
and $\Sigma_m$ is the suspension functor of the
stable category $\ul{\mod}\,\cd$, cf. \cite[16.4]{Heller68}. Notice that $\Sigma_m$ only
depends on the underlying $k$-category of $\cd$. Now if $S_U$
denotes the simple $\cd$-module associated with an indecomposable object $U$
of $\cd$, we have
\[
S_{\Sigma U} = \Sigma_\cd S_U \iso \Sigma_m^3 S_U
\]
in the stable category $\ul{\mod}\,\cd$.
Since $F$ is a $k$-linear autoequivalence, the functor it induces
in $\ul{\mod}\,\cd$ commutes with $\Sigma_m$ and
we have $S_{F\Sigma U} \cong S_{\Sigma FU}$ in $\ul{\mod}\,\cd$
for each indecomposable $U$ of $\cd$. It follows that if $S_U$ is
not zero in $\ul{\mod}\,\cd$, then we have an isomorphism
$F\Sigma U \cong \Sigma F U$ in $\cd$. Now $S_U$ is zero in
$\ul{\mod}\, \cd$ only if $S_U$ is projective, which happens if and
only if the canonical map $P_U \to S_U$ is an isomorphism, where
$P_U=\cd(?,U)$ is the projective module associated with $U$.
This is the case only if no arrows arrive at $U$ in the Auslander--Reiten
quiver of $\cd$ and this happens if and only if no arrows start or arrive
at $U$. The same then holds for the suspensions $\Sigma^n U$, $n\in\Z$.
Since we have assumed that $\ct$ and hence $\cd$ is connected, this
case is impossible. Therefore, we have an isomorphism $\Sigma F U\cong F \Sigma U$
for each indecomposable $U$ of $\cd$.
It follows that $T=F(kQ)$ is a tilting object of $\cd$. By \cite{Keller00}, we
can lift $T$ to a $kQ$-bimodule complex $Y$ which is even unique in the
derived category of bimodules if we take the isomorphism 
$kQ \iso \End_\cd(F(kQ))$ into account. Since the $k$-linear functors $F$ and 
$\Phi=?\lten_{kQ} Y$ are isomorphic when restricted to $\add(kQ)$, they
are isomorphic as $k$-linear functors by Riedtmann's knitting argument
\cite{Riedtmann80}. Since the triangulated category $\ct$
is algebraic, we may assume that it equals the perfect derived category
$\per \ca$ of a small dg $k$-category $\ca$. Using Riedtmann's knitting
argument again, it follows from the proof of Theorem~7.2
in \cite{Amiot07a} that the composition
\[
\xymatrix{\cd \ar[r]^-{\pi} & \cd/\Phi \ar[r]^-{G} & \ct=\per\ca}
\]
lifts to a triangle functor $?\lten_{kQ} X$ for a $kQ$-$\ca$-bimodule $X$.
Moreover, it is shown there that this composition factors through an
algebraic triangle equivalence
\[
\Gamma: \cd/\Phi \iso \ct=\per\ca.
\]
Since the compositions $\Gamma\circ \pi$ and $G\circ \pi$ are isomorphic
as $k$-linear functors, the functors $\Gamma$ and $G$ are isomorphic as 
$k$-linear functors.
\end{proof}

\section{The non connected case}

Let $k$ be an algebraically closed field and $\ct$ a $k$-linear $\Hom$-finite
triangulated category with split idempotents and finitely many isomorphism
classes of indecomposables. We assume that $\ct$ is algebraic and standard
but possibly non connected. 

Assume first that $\ct$ is $\Sigma$-connected,
i.e. that the $k$-linear orbit category $\ct/\Sigma$ is connected.
Then the argument at the beginning of the above proof shows that
either $\ct$ is connected or $\ct$ is $k$-linearly equivalent to
$\cd^b(\mod kA_1)/F$ for a $k$-linear equivalence $F$ of $\cd^b(\mod kA_1)$.
Clearly $F$ lifts to a triangle autoequivalence, namely a power $\Sigma^N$,
of the suspension functor of $\cd^b(\mod kA_1)$. We may assume $N>0$ equals
the number of isoclasses of indecomposables of $\ct$. Since the underlying
$k$-category of $\ct$ is abelian and semi-simple, all triangles of $\ct$
split and $\ct$ is triangle equivalent to $\cd^b(\mod kA_1)/\Sigma^N$.

Let us now drop the $\Sigma$-connectedness assumption on $\ct$.
Then clearly $\ct$ decomposes, as a triangulated category, into
finitely many $\Sigma$-connected components (the pre-images of
the connected components of $\ct/\Sigma$). Each of these is either
connected or triangle equivalent to $\cd^b(\mod kA_1)/\Sigma^N$
for some $N>0$. Thus, the indecomposables of $\ct$ either lie in
connected components or in $\Sigma$-connected non connected
components and the triangle equivalence class of the latter is determined by the action
of $\Sigma$ on the isomorphism classes of indecomposables.
We obtain the

\begin{corollary} $\ct$ is determined up to triangle equivalence by its
underlying $k$-category and the action of $\Sigma$ on the set of
isomorphism classes of indecomposables.
\end{corollary}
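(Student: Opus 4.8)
Let $\ct'$ be a second triangulated category satisfying the standing hypotheses of this section, and suppose we are given a $k$-linear equivalence $G\colon\ct\iso\ct'$ whose induced bijection between sets of isoclasses of indecomposables intertwines the respective suspension functors on the level of isoclasses. We have to produce a triangle equivalence $\ct\iso\ct'$, and the plan is to do so component by component, reducing to the two situations already settled above.

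First I would check that the decomposition into $\Sigma$-connected components is an invariant of the stated data. By definition it is indexed by the connected components of the $k$-linear orbit category $\ct/\Sigma$, and both the set of these components and the partition of $\ind\ct$ they induce can be read off from the Auslander--Reiten quiver of $\ct$ (which, $\ct$ being standard, carries its whole $k$-linear structure) together with the permutation of $\ind\ct$ given by $\Sigma$. Hence $G$ carries the $\Sigma$-connected components $\ct_i$ of $\ct$ bijectively onto those $\ct'_j$ of $\ct'$, matching each $\ct_i$ with some $\ct'_{j(i)}$ by a $k$-linear equivalence that again intertwines the suspensions on isoclasses. Each such component is a direct factor, as a triangulated category, of an algebraic and standard category with finitely many indecomposables, hence is itself algebraic, standard and of the same type, so it is subject to the analysis of Section~1 and of the first part of the present section.

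Next I would treat a matched pair componentwise. If $\ct_i$ is connected, then so is $\ct'_{j(i)}$, connectedness of the Auslander--Reiten quiver being part of the $k$-linear data; the Corollary of Section~1, applied to each of them, shows that they are triangle equivalent -- in fact both are triangle equivalent to $\cd^b(\mod kQ)/\Phi$ for one and the same Dynkin quiver $Q$ and one and the same triangle autoequivalence $\Phi$, the pair $(Q,\Phi)$ being manufactured from the common $k$-linear structure by the knitting argument of \cite{Riedtmann80} together with \cite{Keller00}. If $\ct_i$ is not connected, then it is $\Sigma$-connected and not connected, so by the first part of this section it is triangle equivalent to $\cd^b(\mod kA_1)/\Sigma^{N}$, where $N$ is the number of isoclasses of indecomposables of $\ct_i$; since the underlying $k$-category of $\ct_i$ is semisimple with precisely $N$ simple objects, those $N$ indecomposables form a single $\Sigma$-orbit, and therefore $G$ forces $\ct'_{j(i)}$ to carry the same invariant $N$ and to be triangle equivalent to the same $\cd^b(\mod kA_1)/\Sigma^{N}$. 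Either way, $\ct_i$ and $\ct'_{j(i)}$ are triangle equivalent.

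Finally I would assemble the pieces: since $\ct$ and $\ct'$ decompose as finite products of their $\Sigma$-connected components and a product of triangle equivalences is again a triangle equivalence, we obtain $\ct\iso\ct'$. The argument is thus essentially a bookkeeping reduction to results already in hand; the one point that really asks for care -- and that I would isolate as a preliminary lemma -- is the first step, namely that the passage to $\Sigma$-connected components, and the induced partition of $\ind\ct$, is genuinely determined by the pair consisting of the underlying $k$-category and the action of $\Sigma$ on $\ind\ct$, so that the given $k$-linear equivalence may be assumed compatible with it.
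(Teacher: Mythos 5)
Your proof is correct and follows essentially the same route as the paper: decompose $\ct$ into its $\Sigma$-connected components (a decomposition visibly determined by the underlying $k$-category together with the action of $\Sigma$ on isoclasses), apply the first Corollary to the connected components and the $\cd^b(\mod kA_1)/\Sigma^N$ analysis to the $\Sigma$-connected non-connected ones, and reassemble. The only difference is that you spell out the bookkeeping (the invariance of the component decomposition and the componentwise matching) that the paper leaves implicit.
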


We refer to Theorem~6.5 of \cite{Hanihara18} for an analogous result
concerning the $\Sigma$-finite triangulated categories $\ct$ and to
\cite{Crawford16} for an application.

I am grateful to Susan Sierra for bringing the question to my attention.


\begin{thebibliography}{1}

\bibitem{Amiot07a}
Claire Amiot, \emph{On the structure of triangulated categories with finitely
  many indecomposables}, Bull. Soc. Math. France \textbf{135} (2007), no.~3,
  435--474.

\bibitem{Crawford16}
Simon Crawford, \emph{Singularity categories of deformations of {K}leinian
  singularities}, arXiv:1610.08430 [math.RA].

\bibitem{Hanihara18}
Norihiro Hanihara, \emph{Auslander correspondence for triangulated categories},
  arXiv:1805.07585 [math.RT].

\bibitem{Heller68}
Alex Heller, \emph{Stable homotopy categories}, Bull. Amer. Math. Soc.
  \textbf{74} (1968), 28--63.

\bibitem{Keller00}
Bernhard Keller, \emph{Bimodule complexes via strong homotopy actions}, Algebr.
  Represent. Theory \textbf{3} (2000), no.~4, 357--376, Special issue dedicated
  to Klaus Roggenkamp on the occasion of his 60th birthday.

\bibitem{Keller05}
\bysame, \emph{{On triangulated orbit categories}}, Doc. Math. \textbf{10}
  (2005), 551--581.

\bibitem{Riedtmann80}
Christine Riedtmann, \emph{Algebren, {D}arstellungsk{\"o}cher,
  \"{U}berlagerungen und zur{\"u}ck}, Comment. Math. Helv. \textbf{55} (1980),
  no.~2, 199--224.

\end{thebibliography}

\def\cprime{$'$} \def\cprime{$'$}
\providecommand{\bysame}{\leavevmode\hbox to3em{\hrulefill}\thinspace}
\providecommand{\MR}{\relax\ifhmode\unskip\space\fi MR }
\providecommand{\MRhref}[2]{%
  \href{http://www.ams.org/mathscinet-getitem?mr=#1}{#2}
}
\providecommand{\href}[2]{#2}

\end{document}